\theoremstyle{plain}
   \newtheorem{theorem}{Theorem}[section]
   \newtheorem{proposition}[theorem]{Proposition}
   \newtheorem{lemma}[theorem]{Lemma}
   \newtheorem{conjecture}[theorem]{Conjecture}
   \newtheorem{question}{Question}
\theoremstyle{definition}
\theoremstyle{remark}
\def\kkk{\kern.2ex\mbox{\raise.5ex\hbox{{\rule{.35em}{.12ex}}}}\kern.2ex}
\numberwithin{equation}{section}
\newcommand{\xx}{\mathbf{x}}
\newcommand{\yy}{\mathbf{y}}
\newcommand{\ww}{\mathbf{w}}
\newcommand{\NN}{\mathbb{N}}
\newcommand{\one}{\mathbf{1}}
\newcommand{\A}{\mathcal{A}}
\newcommand{\LL}{\mathcal{L}}
\renewcommand{\SS}{\mathbb{S}}
\newcommand{\RR}{\mathbb{R}}
\newcommand{\sym}{\mathfrak{S}}
\renewcommand{\Im}{{\rm Im}}
\def\newop#1{\expandafter\def\csname #1\endcsname{\mathop{\rm
#1}\nolimits}}
\title[Lorentzian and real stable polynomials and Euclidean balls]{Spaces of  Lorentzian and real stable polynomials are Euclidean balls}
\subjclass[2020]{52B40, 05E14 and 60J27} 
\author{Petter Br\"and\'en}
\thanks{The author is a Wallenberg Academy Fellow supported by the Knut and Alice Wallenberg foundation and the G\"oran Gustafsson foundation}
\address{Department of Mathematics, KTH, Royal Institute of Technology, Stockholm, Sweden.}
\email{pbranden@kth.se}
\begin{document}

\maketitle
\begin{center}{\small \emph{Till Danne, min v\"an}}
\end{center}

\begin{abstract}
We prove that  projective spaces of  Lorentzian and real stable polynomials are homeomorphic to closed Euclidean balls. This solves a conjecture of June Huh and the author. The proof utilizes and refines a connection between the symmetric exclusion process in  Interacting Particle Systems and the geometry of polynomials. 
\end{abstract}
\thispagestyle{empty}

\section{Introduction}
Over the past two decades there has been a surge of activity in the study of stable, hyperbolic and Lorentzian polynomials. Spectacular applications in several different areas have been given, see \cite{ALOGV,ALOGV3,BH,MSS,Wag} and the references therein. In this paper we are interested in the shapes of spaces of such polynomials. 

The space of Lorentzian polynomials was studied by Huh and the author in \cite{BH}, by Gurvits in \cite{Gur} who called it the space of strongly log-concave polynomials, and by Anari \emph{et al.} in \cite{ALOGV3} who called it the space of completely log-concave polynomials. This space contains all volume polynomials of convex bodies and projective varieties, as well as homogeneous stable polynomials with nonnegative coefficients. 
The theory  of Lorentzian polynomials links discrete and continuous notions of convexity. Denote by $\mathbb{P}\mathrm{L}^d_n$ the projective space of all Lorentzian degree $d$ polynomials in the variables $w_1, \ldots, w_n$. This remarkable space is stratified by the family of all $\mathrm{M}$-convex sets \cite{Mu} on the discrete simplex.  It was proved in \cite{BH} that the space $\mathbb{P}\mathrm{L}^d_n$ is compact and contractible, and conjectured that  $\mathbb{P}\mathrm{L}^d_n$ is homeomorphic to a closed Euclidean ball. We prove this conjecture here (Theorem~\ref{main})  by utilizing and further developing a powerful connection between the \emph{symmetric exclusion process} (SEP)  and the \emph{geometry of polynomials}, which was discovered and studied  in \cite{BBL}. The proof technique also applies to spaces of real stable polynomials, and we prove that these spaces are homeomorphic to closed Euclidean balls. 

It was proved in \cite{BBL} and \cite{BH} that SEP preserves the classes of multiaffine stable and multiaffine Lorentzian polynomials, respectively. In Sections \ref{bulk} and \ref{stabilt} we study in more detail how SEP acts on multiaffine Lorentzian polynomials. We prove that SEP is a contractive flow (Lemma~\ref{shrink}) that maps the boundaries of the spaces into their interiors for all positive times (Lemma~\ref{push}). Furthermore, SEP contracts the spaces to a  point in the interior. These results are used in Section~\ref{proof}, in conjunction with a construction of Galashin, Karp and  Lam \cite{GKL}, to prove that various spaces of Lorentzian  polynomials are  closed Euclidean balls.

In Section \ref{stabilt} we prove, by similar methods, that projective spaces of real stable polynomials are homeomorphic to closed Euclidean balls. This refines a result of Nuij \cite{Nuij} who proved that such spaces are contractible.

In the final section we identify topics for further studies. 
\section{Lorentzian polynomials and the symmetric exclusion process}\label{bulk}
 Let $\mathrm{H}_n^d$ denote the linear space of  all degree $d$ homogeneous polynomials in $\RR[w_1,\ldots,w_n]$, adjoin the identically zero polynomial. 
 A polynomial $f$ in $\mathrm{H}_n^d$, where $d\geq 2$, is \emph{strictly Lorentzian} if
\begin{enumerate}
\item all coefficients of $f$ are positive, and 
\item for any $1\leq i_1, i_2, \ldots, i_{d-2} \leq n$, the quadratic $\partial_{i_1} \partial_{i_2}\cdots \partial_{i_{d-2}} f$, where $\partial_j= \partial /\partial w_j$,  has the \emph{Lorentzian signature} $(+,-,-,\ldots,-)$. 
\end{enumerate}
Also, any homogeneous  polynomial of degree $0$ or $1$ with only positive coefficients is defined to be strictly Lorentzian. Polynomials that are limits  in $\mathrm{H}_n^d$, in the standard Euclidean topology on the space of coefficients of the polynomials, of strictly Lorentzian polynomials are called \emph{Lorentzian}. 

Denote by $\mathrm{L}^d_n$ the space of Lorentzian polynomials in $\mathrm{H}_n^d$, and by $\mathbb{P}\mathrm{L}^d_n$ the projectivization of $\mathrm{L}^d_n$. Since Lorentzian polynomials have nonnegative coefficients, we may identify  $\mathbb{P}\mathrm{L}^d_n$ with the space of all polynomials $f$ in $\mathrm{L}^d_n$ such that $f(\one)=1$, where $\one =(1,1,\ldots, 1)$. Let $\mathrm{E}_n^d$ be the affine Euclidean space of all $f$ in $\mathrm{H}_n^d$ for which $f(\one)=1$. Let further $\underline{\mathrm{E}}_n^d$ be the space of all \emph{multiaffine} polynomials in  $\mathrm{E}_n^d$, i.e., those polynomials in $\mathrm{E}_n^d$ that have degree at most one in each variable. Denote by 
$\mathbb{P}\underline{\mathrm{L}}_n^d$ the space of multiaffine polynomials in $\mathbb{P}\mathrm{L}^d_n$, i.e., the space of all Lorentzian polynomials in $\underline{\mathrm{E}}_n^d$. We equip   $\mathbb{P}\mathrm{L}^d_n$ and $\mathbb{P}\underline{\mathrm{L}}_n^d$ with the standard topology on the affine Euclidean spaces $\mathrm{E}_n^d$ and $\underline{\mathrm{E}}_n^d$.

For $0\leq d \leq n$, let $[n]=\{1,2,\ldots, n\}$ and  $\binom {[n]} d = \{ S \subseteq [n] \mid |S|=d\}$.   The $d$th \emph{elementary symmetric polynomial} in the variables $w_1,\ldots, w_n$ is
$$
e_d(\ww)=e_d(w_1,\ldots, w_n)= \sum_{S \in \binom {[n]} d } \ww^S, \ \ \ \ \ \mbox{ where } \ww^S= \prod_{i \in S}w_i.
$$  
Let $\SS^{n-1}=\{\xx \in \RR^n \mid x_1^2+\cdots+x_n^2=1\}$ denote the unit sphere in $\RR^n$, and let 
$\SS^{n-2}_{\one}=\{\xx \in \SS^{n-1} \mid  x_1+\cdots+x_n=0\}$. 
By e.g. \cite[Theorem 2.25, Lemma 2.5]{BH}, it follows that a multiaffine polynomial $$f= \sum_{S \in \binom {[n]} d } a(S) \ww^S$$ in $\mathrm{E}_n^d$, with nonnegative coefficients $a(S)$, is in the interior of $\mathbb{P}\underline{\mathrm{L}}^d_n$ if and only if $a(S)>0$ for all $S\in  \binom {[n]} d$, and one (and then both) of the following two conditions are satisfied
\begin{enumerate}
\item for each set $S \subset [n]$ of size $d-2$, the quadratic  $\partial^S f=\prod_{j\in S}\partial_jf$, considered as a polynomial in the $n-d+2$ variables $\{w_j \mid   j  \in [n] \setminus S\}$, has signature $(+,-,-,\ldots,-)$.
\item  for each set $S \subset [n]$ of size $d-2$ and each $\yy \in \RR^{n-d+2}$ not parallel to $\one$, the degree two polynomial 
$
\partial^Sf(t\one -\yy)
$ has two real and distinct zeros. 
\end{enumerate}
By homogeneity, (2) can be replaced by 
\begin{itemize}
\item[(2')]  for each set $S \subset [n]$ of size $d-2$ and each $\yy \in \mathbb{S}^{n-d}_\one$, the degree two polynomial 
$
\partial^Sf(t\one -\yy)
$ has two real and distinct zeros. 
\end{itemize}

\begin{lemma}\label{elementary}
Let $0\leq d \leq n$ be integers. The normalized elementary symmetric polynomial $e_d(\ww)/\binom n d$ lies in the interior of $\mathbb{P}\underline{\mathrm{L}}^d_n$. 
\end{lemma}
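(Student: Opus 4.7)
The plan is to verify condition (1) in the characterization of the interior of $\mathbb{P}\underline{\mathrm{L}}^d_n$ given just before the lemma. The cases $d=0$ and $d=1$ are immediate from the definition of Lorentzian polynomials (every homogeneous polynomial of degree $\leq 1$ with positive coefficients is strictly Lorentzian, hence in the interior), so I will focus on $d\geq 2$. Note first that every coefficient of $e_d/\binom{n}{d}$ equals $1/\binom{n}{d}>0$, so the positivity part of the interior condition is automatic.

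The key observation is that for any $S\subset [n]$ with $|S|=d-2$, a direct computation using $e_d(\ww)=w_i\, e_{d-1}(\ww_{\setminus i}) + e_d(\ww_{\setminus i})$ applied iteratively gives
$$
\partial^S\!\left(\frac{e_d(\ww)}{\binom{n}{d}}\right) = \frac{1}{\binom{n}{d}}\, e_2\bigl(w_j : j \in [n]\setminus S\bigr),
$$
a positive multiple of the elementary symmetric polynomial $e_2$ in the $m:=n-d+2\geq 2$ remaining variables. So it suffices to check that $e_2(x_1,\ldots,x_m)$ has Lorentzian signature $(+,-,\ldots,-)$ for every $m\geq 2$.

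For this I would use the identity $2e_2(\xx)=\xx^{T}(J-I)\xx$, where $J$ is the $m\times m$ all-ones matrix and $I$ is the identity. The matrix $J$ has eigenvalues $m$ (with eigenvector $\one$) and $0$ (with multiplicity $m-1$), hence $J-I$ has eigenvalues $m-1$ and $-1$ (with multiplicity $m-1$). The associated quadratic form therefore has exactly one positive eigenvalue and $m-1$ negative eigenvalues, which is precisely the signature $(+,-,\ldots,-)$. Applying this to each $\partial^S(e_d/\binom{n}{d})$ verifies condition (1), placing $e_d/\binom{n}{d}$ in the interior of $\mathbb{P}\underline{\mathrm{L}}^d_n$.

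There is really no serious obstacle here; the proof is essentially the spectral computation for $J-I$ together with the identification of iterated partial derivatives of $e_d$ with $e_2$ on the complementary variable set. The only sanity check worth making is the boundary case $d=n$, where $m=2$ and $e_2(x_1,x_2)=x_1x_2$ has coefficient matrix $\tfrac{1}{2}\bigl(\begin{smallmatrix}0&1\\1&0\end{smallmatrix}\bigr)$ with eigenvalues $\pm 1/2$, confirming signature $(+,-)$ as required.
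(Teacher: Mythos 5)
Your proof is correct and is essentially the paper's own argument: reduce to $d\geq 2$, observe that $\partial^S e_d$ is $e_2$ in the remaining $n-d+2$ variables, and read off the Lorentzian signature from the eigenvalues of the Hessian $J-I$ (one positive eigenvalue $n-d+1$, the rest equal to $-1$). The added checks on coefficient positivity and the low-degree cases are fine but not needed beyond what the paper records.
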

\begin{proof} We may assume $d\geq 2$. 
 Let $|S|=d-2$. The Hessian of $\partial^S e_d=e_2$, considered as a polynomials in $n-d+2$ variables, is $\one^T \one-I$, where $I$ is the identity matrix. This matrix has eigenvalues $-1, -1,\ldots, -1$ and $n-d+1$. 
\end{proof}

The symmetric exclusion process (SEP) is one of the main models in Interacting Particle Systems \cite{Li}. It models particles moving on a finite or countable set in a continuous way. Particles can only jump to vacant sites, and there can be at most one particle per site, see \cite{BBL, Li}. Here we view SEP as a flow $T_s : \underline{\mathrm{H}}_n^d \to \underline{\mathrm{H}}_n^d$, $s \in \RR$, a family of linear operators indexed by a real parameter (time) satisfying $$T_{s} \circ T_t=T_{s+t}, \ \ \ \mbox{ for all } s,t \in \RR, $$ 
where $T_0=I$ is the identity operator.

The symmetric group $\sym_n$ acts linearly on $\underline{\mathrm{H}}_n^d$ by permuting the variables, 
$$
\sigma(f)(\ww) = f(w_{\sigma(1)},\ldots, w_{\sigma(n)}), \ \ \mbox{ for all } \sigma \in \sym_n. 
$$ 
The flow $T_s : \underline{\mathrm{H}}_n^d \to \underline{\mathrm{H}}_n^d$, $s \in \RR$, is defined by
$ T_s =  e^{-s} e^{s\LL}$, where  
$$
\LL= \sum_{\tau} q_{\tau}\tau,   
$$
where the sum is over all transposition $\tau$ in $\sym_n$. We require that the numbers $q_{\tau}$ are nonnegative and sum to one, and that $\{\tau \mid q_\tau >0\}$ generates $\sym_n$.  Clearly $T_s: \underline{\mathrm{E}}_n^d\to \underline{\mathrm{E}}_n^d$ for all $s \in \RR$,  and  $e_d(\ww)/\binom n d$ is an eigenvector/polynomial of $\LL$.

Notice that the matrix corresponding to a transposition $\tau$, with respect to the basis $\{\ww^S \mid S \in \binom {[n]} d \}$ of $\underline{\mathrm{H}}_n^d$, is symmetric. Hence so is the matrix corresponding to $\LL$.   
Thus  $\LL$ 
 has  an orthogonal basis of eigenvectors 
$f_0,f_1, \ldots, f_N$ in $\underline{\mathrm{H}}_n^d$, where $N= \binom n d-1$ and $f_0(\ww)= e_d(\ww)/\binom n d$. Also, by the assumption on $\{ \tau \mid q_\tau >0\}$, there exists a positive number $m$ such that all entries of the matrix representing $\LL^m$ are positive.  
By Perron-Frobenius theory \cite[Chapter~1]{BP}, the eigenvalues of $\LL$ satisfy $$1=\lambda_0>\lambda_1\geq \cdots \geq \lambda_N>-1.$$
Hence if we write an element $f$ in  $\underline{\mathrm{H}}_n^d$ as $f= x_0f_0 + \sum_{j=1}^N x_j f_j$, then 
\begin{equation}\label{form}
T_s(f)= x_0f_0 + \sum_{j=1}^N x_j e^{-s(1-\lambda_j)}f_j. 
\end{equation}
Consequently $T_s(f) \to x_0f_0$ as $n \to \infty$, and since $T_s(f)(\one)=f(\one)$ we have $x_0=f(\one)$ and $f_j(\one)=0$ for all $j>0$.

Define a ball of radius $r$, centered at $f_0$,  by 
$$
B_r= \left\{ f_0 +\sum_{j=1}^N x_j f_j \mid \xx \in \RR^N \mbox{ and } \sum_{j=1}^N x_j^2 \leq r^2\right\}.
$$
Thus $B_r$ is a closed Euclidean ball in the affine space $\underline{\mathrm{E}}_n^d$. 

Using  \eqref{form}, we may read off how $T_s$ deforms the ball $B_r$. 
\begin{lemma}\label{shrink}
If $r$ and $s$ are positive numbers, then 
$$
B_{r_N(s)} \subseteq T_s(B_r) \subseteq B_{r_1(s)} \ \ \mbox{ and } \ \ B_{r_1(-s)} \subseteq T_{-s}(B_r) \subseteq B_{r_N(-s)},
$$
where $r_N(s)= r\exp(-s(1-\lambda_N))$ and $r_1(s)=r\exp(-s(1-\lambda_1))$. 
\end{lemma}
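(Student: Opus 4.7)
The plan is to read off all four inclusions directly from the explicit diagonalization \eqref{form}. Writing any $f\in \underline{\mathrm{E}}_n^d$ as $f = f_0 + \sum_{j=1}^N x_j f_j$, the ball $B_r$ is literally the condition $\|\xx\|\leq r$ on the coordinates $\xx = (x_1,\dots,x_N)$, and \eqref{form} says that $T_s$ acts on these coordinates by the diagonal map $x_j \mapsto \mu_j(s)\, x_j$ with $\mu_j(s) = e^{-s(1-\lambda_j)}$. Thus $T_s(B_r)$ is nothing but the ellipsoid whose $f_j$-semi-axis has length $r\mu_j(s)$.

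For $s>0$ the spectral chain $-1 < \lambda_N \leq \cdots \leq \lambda_1 < 1$ pinches $\mu_j(s)$ between $\mu_N(s) = r_N(s)/r$ and $\mu_1(s) = r_1(s)/r$, both strictly less than $1$. The outer inclusion $T_s(B_r)\subseteq B_{r_1(s)}$ is then immediate, since $\sum_{j} \mu_j(s)^2 x_j^2 \leq \mu_1(s)^2 \sum_j x_j^2 \leq r_1(s)^2$. For the inner inclusion $B_{r_N(s)} \subseteq T_s(B_r)$, I would use that $T_s$ is invertible on $\underline{\mathrm{H}}_n^d$ (all $\mu_j(s)>0$): given $g = f_0 + \sum_j y_j f_j$ with $\sum_j y_j^2 \leq r_N(s)^2$, its preimage under $T_s$ has $j$th coordinate $y_j/\mu_j(s)$, whose squared sum is at most $\mu_N(s)^{-2} \cdot r_N(s)^2 = r^2$, so the preimage lies in $B_r$.

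The two statements for $T_{-s}$ are obtained by running the identical argument with $s$ replaced by $-s$. Now $\mu_j(-s)\geq 1$ for all $j\geq 1$, and the extremes swap: $\mu_1(-s) \leq \mu_j(-s) \leq \mu_N(-s)$, producing $B_{r_1(-s)} \subseteq T_{-s}(B_r) \subseteq B_{r_N(-s)}$ by the same pinching. I do not anticipate a substantive obstacle here; the lemma is essentially a spectral-pinching exercise on the ellipsoid $T_s(B_r)$, and the only thing to watch is the order reversal of $\mu_1$ and $\mu_N$ between positive and negative times, which is exactly what the statement of the lemma records by swapping $r_1$ and $r_N$ on the two sides.
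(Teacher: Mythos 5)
Your proposal is correct and is exactly the argument the paper intends: the paper gives no written proof beyond the remark that the inclusions are ``read off'' from the diagonalization \eqref{form}, and your spectral-pinching of the multipliers $e^{-s(1-\lambda_j)}$ between $e^{-s(1-\lambda_1)}$ and $e^{-s(1-\lambda_N)}$ (with the order of extremes reversing for $-s$) is precisely that reading-off, carried out carefully.
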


 The next proposition was essentially proved in \cite{BH}. For completeness, we provide the remaining details here. 

\begin{proposition}\label{LorSEP}
If $s\geq 0$, then $T_s : \underline{\mathrm{L}}^d_n \to \underline{\mathrm{L}}^d_n$. 
\end{proposition}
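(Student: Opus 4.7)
The plan is to realize $T_s$ as a convex combination of the permutation operators $\sigma\in\sym_n$, and then invoke that the cone of Lorentzian polynomials is closed under variable permutations and under convex combinations.

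Expanding the exponential and using $\LL=\sum_\tau q_\tau\tau$ with $q_\tau\ge 0$ and $\sum_\tau q_\tau=1$, the multinomial expansion gives
$$
\LL^k=\sum_{(\tau_1,\dots,\tau_k)}q_{\tau_1}\cdots q_{\tau_k}\,\tau_1\cdots\tau_k=\sum_{\sigma\in\sym_n}c^{(k)}_\sigma\,\sigma,
$$
where $c^{(k)}_\sigma\ge 0$ and $\sum_\sigma c^{(k)}_\sigma=(\sum_\tau q_\tau)^k=1$. For $s\ge 0$ every term below is nonnegative, so the series
$$
T_s \;=\; e^{-s}\sum_{k\ge 0}\frac{s^k \LL^k}{k!} \;=\; \sum_{\sigma\in\sym_n} w_\sigma(s)\,\sigma,\qquad w_\sigma(s):= e^{-s}\sum_{k\ge 0}\frac{s^k c^{(k)}_\sigma}{k!},
$$
converges absolutely, with $w_\sigma(s)\ge 0$ and $\sum_{\sigma\in\sym_n}w_\sigma(s)=e^{-s}e^s=1$. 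Thus $T_s$ is a \emph{finite} convex combination of the $n!$ permutation operators on $\underline{\mathrm{H}}^d_n$.

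Next, each $\sigma\in\sym_n$ maps $\underline{\mathrm{L}}^d_n$ into itself: positivity of coefficients and the Lorentzian signature of every iterated quadratic derivative are invariant under relabeling of variables, and this property passes to the closure defining Lorentzian polynomials. Hence for any $f\in\underline{\mathrm{L}}^d_n$, the polynomial $T_s(f)=\sum_\sigma w_\sigma(s)\,\sigma(f)$ is a (finite) convex combination of polynomials in $\underline{\mathrm{L}}^d_n$. Appealing to the convex-cone property of the Lorentzian polynomials from \cite{BH} (which restricts to the multiaffine subspace automatically), we conclude that $T_s(f)\in\underline{\mathrm{L}}^d_n$.

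The sole nontrivial ingredient is that the Lorentzian cone is closed under nonnegative linear combinations, since entrywise-positive matrices of Lorentzian signature are not obviously closed under addition without further control. This is the genuine content supplied by \cite{BH}, and is presumably the sense in which the proposition was ``essentially proved'' there; what remains is the convex-combination packaging of $T_s$ given above.
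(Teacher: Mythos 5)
There is a genuine gap, and it sits exactly where you locate ``the sole nontrivial ingredient'': the set of Lorentzian polynomials is \emph{not} a convex cone, and \cite{BH} does not (and could not) assert that it is. A concrete counterexample inside $\underline{\mathrm{L}}^2_4$: both $w_1w_2$ and $w_3w_4$ are Lorentzian, but $\tfrac12(w_1w_2+w_3w_4)$ is not, since its support $\{\{1,2\},\{3,4\}\}$ fails the basis-exchange axiom and hence is not $\mathrm{M}$-convex, whereas every Lorentzian polynomial has $\mathrm{M}$-convex support. This counterexample is not peripheral to your argument --- it is precisely of the form $\tfrac12\bigl(f+\sigma(f)\bigr)$ with $\sigma=(1\,3)(2\,4)\in\sym_4$, i.e.\ a two-term instance of the convex combination $\sum_{\sigma}w_\sigma(s)\,\sigma(f)$ that your decomposition produces. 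So while your identity $T_s=\sum_{\sigma\in\sym_n}w_\sigma(s)\,\sigma$ with $w_\sigma(s)\geq 0$, $\sum_\sigma w_\sigma(s)=1$ is algebraically correct, and each individual $\sigma$ does preserve $\underline{\mathrm{L}}^d_n$, the final step (convex combinations of Lorentzian polynomials are Lorentzian) is false, and the proof collapses.

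What \cite{BH} actually supplies is much narrower: \cite[Corollary~3.9]{BH} states that $(1-\theta)I+\theta\tau$ preserves the Lorentzian property when $\tau$ is a single \emph{transposition} and $0\leq\theta\leq 1$. This is a statement about the specific pair $f$, $\tau(f)$ --- which differ only by swapping two variables and are compatible in a way two arbitrary Lorentzian polynomials (or even $f$ and $\sigma(f)$ for general $\sigma$) are not. The paper's proof therefore proceeds differently: it first handles $\LL=\tau$ a single transposition, writing $e^{s\tau}=\cosh(s)I+\sinh(s)\tau$, which after normalization is exactly a convex combination of $I$ and $\tau$ covered by Corollary~3.9; it then assembles the general generator $\LL=\sum_\tau q_\tau\tau$ via the Trotter product formula $e^{s(\LL_1+\LL_2)}=\lim_k\bigl(e^{(s/k)\LL_1}\circ e^{(s/k)\LL_2}\bigr)^k$ together with the closedness of $\underline{\mathrm{L}}^d_n$. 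Some such limiting or composition device is unavoidable here, because the naive one-shot convexity argument fails.
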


\begin{proof}
Assume first that $\LL=\tau$ is a transposition. It was proved in  \cite[Corollary 3.9]{BH} that any operator of the form 
$(1-\theta)I + \theta \tau$, where $0\leq \theta \leq 1$, preserves the Lorentzian property. Since 
$$
e^{s\LL}= \frac {e^s+e^{-s}} 2 I+ \frac {e^s-e^{-s}} 2 \tau, 
$$
 we deduce $e^{s\LL} : \underline{\mathrm{L}}^d_n \to \underline{\mathrm{L}}^d_n$ for all $s\geq 0$. 

Suppose $\LL_i : \underline{\mathrm{H}}^d_n \to \underline{\mathrm{H}}^d_n$ are linear operators such that $e^{s\LL_i} : \underline{\mathrm{L}}^d_n \to \underline{\mathrm{L}}^d_n$ for $i=1,2$ and all $s\geq 0$. 
By the Trotter product formula \cite[page 33]{EK}, 
$$
e^{s(\LL_1+\LL_2)}= \lim_{k\to \infty} \left( e^{(s/k)\LL_1} \circ e^{(s/k)\LL_2}\right)^k.  
$$
Hence $e^{s(\LL_1+\LL_2)} : \underline{\mathrm{L}}^d_n \to \underline{\mathrm{L}}^d_n$, for all $s\geq 0$, since $\underline{\mathrm{L}}^d_n$ is closed. Iterating this proves the proposition. 
\end{proof}

\begin{lemma}\label{push}
If $f \in \mathbb{P}\underline{\mathrm{L}}^d_n$ and $s>0$, then $T_s(f)$ lies in the interior of  $\mathbb{P}\underline{\mathrm{L}}^d_n$.
\end{lemma}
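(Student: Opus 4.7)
My plan is to expand $T_s$ as a positive convex combination $\sum_\sigma p_s(\sigma)\,\sigma$ of the permutation operators and then split off the uniform-average part, which is a positive multiple of $f_0$ and thus sits in the interior of the Lorentzian cone. Since sums of an interior point with any cone element stay in the interior, this will finish the argument. First I would expand
$$
T_s \;=\; e^{-s}\sum_{k=0}^\infty \frac{s^k}{k!}\LL^k \;=\; \sum_{\sigma \in \sym_n} p_s(\sigma)\,\sigma,
$$
where $p_s(\sigma)$ is a nonnegative sum of terms of the form $e^{-s}\frac{s^k}{k!}q_{\tau_1}\cdots q_{\tau_k}$ over words $\tau_1\cdots\tau_k$ multiplying to $\sigma$. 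Because $\{\tau:q_\tau>0\}$ generates $\sym_n$, each $\sigma$ admits at least one such word, and so $p_s(\sigma)>0$ whenever $s>0$. Applying $T_s$ to $f_0$ (which is symmetric and satisfies $\LL f_0=f_0$, hence $T_sf_0=f_0$) yields $\sum_\sigma p_s(\sigma)=1$, so the expansion is genuinely convex.

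Next I would set $\epsilon=\min_\sigma p_s(\sigma)>0$ and write
$$
T_s(f)\;=\;\epsilon\sum_{\sigma}\sigma(f)\;+\;\sum_{\sigma}\bigl(p_s(\sigma)-\epsilon\bigr)\sigma(f).
$$
The symmetrization $\sum_\sigma \sigma(f)$ is a symmetric multiaffine polynomial of degree $d$ whose value at $\one$ is $n!$ (since $f(\one)=1$), so it must equal $n!\,f_0$. Each $\sigma(f)$ lies in $\underline{\mathrm{L}}^d_n$ because permuting variables obviously preserves Lorentzianness, so the second sum, a nonnegative combination of Lorentzian polynomials, lies in $\underline{\mathrm{L}}^d_n$ by convexity of the Lorentzian cone (a result of \cite{BH}). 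This gives the decomposition $T_s(f)=\epsilon n!\,f_0+h$ with $h\in\underline{\mathrm{L}}^d_n$.

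To conclude, I would verify that $f_0$ lies in the Euclidean interior of $\underline{\mathrm{L}}^d_n$ inside the full space $\underline{\mathrm{H}}^d_n$: the proof of Lemma~\ref{elementary} shows that every $\partial^Sf_0$ with $|S|=d-2$ has a Hessian with nondegenerate Lorentzian signature, and strict Lorentzianness is an open condition on $\underline{\mathrm{H}}^d_n$, so $f_0$ is indeed interior. Using the standard facts that $\mathrm{int}(C)$ of a convex cone is closed under positive scaling and that $\mathrm{int}(C)+C\subseteq\mathrm{int}(C)$, the decomposition forces $T_s(f)\in\mathrm{int}(\underline{\mathrm{L}}^d_n)$. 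Since $T_s$ fixes $f_0$, it preserves the affine slice $\underline{\mathrm{E}}^d_n$, so $T_s(f)\in\mathrm{int}(\underline{\mathrm{L}}^d_n)\cap\underline{\mathrm{E}}^d_n\subseteq\mathrm{int}(\mathbb{P}\underline{\mathrm{L}}^d_n)$.

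The main (modest) obstacle I anticipate is the cone/affine-slice bookkeeping in the final step: one must distinguish the Euclidean interior of $\underline{\mathrm{L}}^d_n$ inside $\underline{\mathrm{H}}^d_n$ from the relative interior of $\mathbb{P}\underline{\mathrm{L}}^d_n$ in $\underline{\mathrm{E}}^d_n$, and check that $f_0$ is in the former. The openness of strict Lorentzianness handles this. Beyond that point, the argument reduces to the elementary observation that an irreducible continuous-time Markov semigroup on $\sym_n$ has strictly positive transition probabilities at every positive time.
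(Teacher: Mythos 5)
Your argument has a fatal flaw: the set $\underline{\mathrm{L}}^d_n$ of Lorentzian polynomials is \emph{not} a convex cone, and \cite{BH} does not claim that it is. A standard counterexample is $w_1w_2$ and $w_3w_4$, each Lorentzian (products of nonnegative linear forms), whose sum $w_1w_2+w_3w_4$ has Hessian with signature $(+,+,-,-)$ and support that is not $\mathrm{M}$-convex, hence is not Lorentzian. Consequently your step asserting that $\sum_{\sigma}(p_s(\sigma)-\epsilon)\sigma(f)$ is Lorentzian ``by convexity of the Lorentzian cone'' fails, and so does the final step, since $\mathrm{int}(C)+C\subseteq\mathrm{int}(C)$ also requires convexity of $C$. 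This non-convexity is precisely why the paper cannot argue termwise over the expansion $T_s=\sum_\sigma p_s(\sigma)\sigma$: even for a single transposition $\tau$, the fact that $(1-\theta)I+\theta\tau$ preserves Lorentzianness is a nontrivial theorem (\cite[Corollary~3.9]{BH}), not a consequence of convexity, and general convex combinations of permutation operators need not preserve the class at all. The paper assembles $T_s$ from these transposition operators via the Trotter product formula (Proposition~\ref{LorSEP}), and then proves interiority by a separate analytic argument: the discriminants $\Delta_S(\yy;s)$ of $t\mapsto\partial^S T_s(f)(t\one-\yy)$ are entire in $s$, not identically zero (because $T_s(f)\to f_0$ and $f_0$ is interior), hence nonvanishing for small $s>0$ uniformly in $\yy$ by Hurwitz and compactness; combined with the open mapping theorem this pushes $f$ into the interior for all $s>0$.

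What survives of your proposal is the first half: the expansion $T_s=\sum_\sigma p_s(\sigma)\sigma$ with $p_s(\sigma)>0$ for all $\sigma$ and $s>0$ (via irreducibility of the walk on $\sym_n$) correctly shows that every coefficient of $T_s(f)$ is strictly positive, which is the same conclusion the paper draws from the positivity of the entries of $\LL^m$. But positivity of coefficients is only one of the two conditions characterizing the interior of $\mathbb{P}\underline{\mathrm{L}}^d_n$; you give no valid argument for the strict signature condition on the quadratics $\partial^S T_s(f)$, which is the substantive content of the lemma.
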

\begin{proof}
By the open mapping theorem and Proposition \ref{LorSEP}, $T_s$ maps the interior of  $\underline{\mathrm{L}}^d_n$ to itself for each $s\geq 0$. Hence it suffices to prove that if $f$ is in $\mathbb{P}\underline{\mathrm{L}}^d_n$, then $T_s(f)$ lies in the interior of $\mathbb{P}\underline{\mathrm{L}}^d_n$ for all $s>0$ sufficiently small.

Since there is a positive number $m$ such that all entries of the matrix representing $\LL^m$ are positive, 
 all coefficients of $T_s(f)$ are positive for all $s>0$ and $f \in \mathbb{P}\underline{\mathrm{L}}^d_n$. 

Let $f \in  \mathbb{P}\underline{\mathrm{L}}^d_n$ and suppose $S \subset [n]$ has size $d-2$, and that $\yy \in  \SS^{n-d}_\one$. Denote by $\Delta_S(\yy;s)$, the discriminant of the degree $2$ 
polynomial 
\begin{equation}\label{stsf}
t \mapsto \partial^S T_s(f)(t\one-\yy). 
\end{equation}
From \eqref{form} it follows that $\Delta_S(\yy;s)$ defines an entire function in the (complex) variable $s$. Hence either there is a positive number $\delta(\yy)$ for which  $\Delta_S(\yy;s) \neq 0$ whenever $0<|s|<\delta(\yy)$, or $\Delta_f(\yy;s)$ is identically zero. The latter can't happen since then, by letting $s \to \infty$, it follows that the discriminant of the polynomial $t \mapsto e_2(t\one -\yy)$ is equal to zero, which contradicts the fact that $e_2(\ww)$ is in the interior of $\mathbb{P}\mathrm{L}^2_n$.  By compactness (of $\SS^{n-d}_\one$) and Hurwitz' theorem on the continuity of zeros, it follows that there is a uniform $\delta>0$ such that $\Delta_S(\yy;s) \neq 0$ whenever $0<|s|<\delta$ and $\yy \in \SS^{n-d}_\one$, and $S \subset [n]$ has size $d-2$. Hence $T_s(f)$ is in the interior of $\mathbb{P}\mathrm{L}^d_n$  for all $0<s<\delta$.
\end{proof}

\section{Balls of Lorentzian polynomials}\label{proof}
Let $\A= \{A_i\}_{j=1}^m$ be a  partition of the set of variables $\{w_1,\ldots, w_n\}$, i.e., a collection of pairwise disjoint and nonempty sets such that 
$$
A_1\cup A_2 \cup \cdots \cup A_m=\{w_1,\ldots, w_n\}. 
$$
Let further $\underline{\mathrm{E}}_\A^d$ be the space of all polynomials in  $\underline{\mathrm{E}}_n^d$ that are symmetric in the variables in $A_j$ for each $j \in [m]$.

Consider the symmetric exclusion process, $T_s$, with rates $q_\tau=1/\binom n 2$  for all transpositions $\tau$ in $\sym_n$. Then $\tau \LL \tau = \LL$ for all transpositions $\tau$, so that $T_s : \underline{\mathrm{E}}_\A^d \to \underline{\mathrm{E}}_\A^d$ for all $s \in \RR$. Let $\mathbb{P}\underline{\mathrm{L}}_\A^d= \mathbb{P}\underline{\mathrm{L}}_n^d \cap  \underline{\mathrm{E}}_\A^d$.

We are now in a position to prove our main results for Lorentzian polynomials. The final tool needed is a general construction in \cite{GKL}. Explicit homeomorphisms may be extracted from the proof of \cite[Lemma~2.3]{GKL}.

\begin{theorem}\label{mainMA}
Let $\A$ be a partition of $\{w_1,\ldots, w_n\}$. The space $\mathbb{P}\underline{\mathrm{L}}^d_\A$  is homeomorphic to a closed Euclidean ball.  
\end{theorem}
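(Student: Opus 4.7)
The plan is to verify the hypotheses of \cite[Lemma~2.3]{GKL} for the compact set $X = \mathbb{P}\underline{\mathrm{L}}^d_\A$, viewed inside the affine Euclidean space $\underline{\mathrm{E}}^d_\A$, using the symmetric exclusion flow $T_s$ with uniform rates $q_\tau = 1/\binom{n}{2}$ as the contractive flow with center $f_0 = e_d(\ww)/\binom{n}{d}$. Concretely, I would need to check: (i) $X$ is compact, (ii) $f_0$ lies in the relative interior of $X$, (iii) $T_s(X) \subseteq X$ for every $s \geq 0$, (iv) $T_s$ pushes the boundary of $X$ into its relative interior for every $s > 0$, and (v) $T_s(f) \to f_0$ as $s \to \infty$ for every $f \in X$.

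For (i), note that $\mathbb{P}\underline{\mathrm{L}}^d_\A = \mathbb{P}\underline{\mathrm{L}}^d_n \cap \underline{\mathrm{E}}^d_\A$ is a closed subset of the compact space $\mathbb{P}\underline{\mathrm{L}}^d_n$. For (ii), $f_0$ is fully $\sym_n$-symmetric, hence belongs to $\underline{\mathrm{E}}^d_\A$; by Lemma~\ref{elementary} it lies in the interior of $\mathbb{P}\underline{\mathrm{L}}^d_n$ within $\underline{\mathrm{E}}^d_n$, and intersecting a small Euclidean ball around $f_0$ with the affine subspace $\underline{\mathrm{E}}^d_\A$ shows that $f_0$ is also in the relative interior of $\mathbb{P}\underline{\mathrm{L}}^d_\A$. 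For (iii), the equal rates force $\tau \LL \tau = \LL$ for every transposition $\tau$, so $T_s$ preserves $\underline{\mathrm{E}}^d_\A$; combined with Proposition~\ref{LorSEP} this yields $T_s(\mathbb{P}\underline{\mathrm{L}}^d_\A) \subseteq \mathbb{P}\underline{\mathrm{L}}^d_\A$. For (iv), Lemma~\ref{push} gives $T_s(f) \in \mathrm{int}(\mathbb{P}\underline{\mathrm{L}}^d_n)$ (interior taken in $\underline{\mathrm{E}}^d_n$); since $T_s(f) \in \underline{\mathrm{E}}^d_\A$, the same ball-intersection argument as in (ii) places $T_s(f)$ in the relative interior of $\mathbb{P}\underline{\mathrm{L}}^d_\A$. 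For (v), the spectral expansion \eqref{form} gives $T_s(f) = f_0 + \sum_{j \geq 1} x_j e^{-s(1 - \lambda_j)} f_j$, and since $1 - \lambda_j > 0$ for $j \geq 1$ (Perron-Frobenius), the remainder vanishes as $s \to \infty$; Lemma~\ref{shrink} makes this uniform on Euclidean balls around $f_0$.

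With (i)--(v) in hand, \cite[Lemma~2.3]{GKL} produces an explicit homeomorphism from $\mathbb{P}\underline{\mathrm{L}}^d_\A$ to a closed Euclidean ball, by using the flow lines of $T_s$ to straighten each radial ray from $f_0$ to the boundary, normalized by the exit time.

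The main obstacle I anticipate is purely a bookkeeping one: making sure the ``interior'' is always measured relative to the correct ambient affine space. Lemmas~\ref{push} and~\ref{elementary} speak about interiors within $\underline{\mathrm{E}}^d_n$, while the GKL construction requires interiors within $\underline{\mathrm{E}}^d_\A$. The key observation bridging the two is that an open Euclidean ball in $\underline{\mathrm{E}}^d_n$ around a point of $\underline{\mathrm{E}}^d_\A$ meets $\underline{\mathrm{E}}^d_\A$ in a relatively open ball, so membership in the ambient interior (together with lying in $\underline{\mathrm{E}}^d_\A$) automatically gives membership in the relative interior. Once that is verified, all pieces slot together and no further work beyond invoking the cited lemmas is needed.
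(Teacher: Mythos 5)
Your proposal is correct and follows essentially the same route as the paper: the SEP flow with uniform rates is used as a contractive flow on $\underline{\mathrm{E}}^d_\A$ centered at $f_0=e_d(\ww)/\binom{n}{d}$, Lemma~\ref{shrink} gives contractivity, Lemma~\ref{push} pushes $\mathbb{P}\underline{\mathrm{L}}^d_\A$ into its interior, and \cite[Lemma~2.3]{GKL} finishes. Your explicit handling of the ambient-versus-relative interior (slicing a ball in $\underline{\mathrm{E}}^d_n$ by the affine subspace $\underline{\mathrm{E}}^d_\A$) is a point the paper leaves implicit, and you verify it correctly.
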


\begin{proof}
Notice that $\underline{\mathrm{E}}_\A^d$ is  the intersection of $\underline{\mathrm{E}}_n^d$ with an affine space, $f_0+U$, where $U$ is a linear subspace of $\span\{f_1,\ldots, f_N\}$.  
We identify the affine linear space $\underline{\mathrm{E}}_\A^d$ with $\RR^M$, where $M=\dim(U)$, equipped with the Euclidean norm, inherited by $\span\{f_1,\ldots, f_N\}$, 
$$
\| f \|  = \sqrt{\sum_{i=1}^N x_i^2}, \ \ \ \ \ \mbox{ if } f = \sum_{i=1}^N x_if_i. 
$$ 
Consider the map 
$F : \RR \times \RR^M \to \RR^M$ defined by 
$$
F(s, f) = T_s(f). 
$$
Then 
\begin{enumerate}
\item the map $F$ is continuous, by e.g. the explicit form \eqref{form}, 
\item $F(0,f)=f$ and $F(s_1+s_2, f)=F(s_1, F(s_2,f))$, for all $f \in \RR^M$ and $s_1, s_2 \in \RR$, 
\item $\|F(s,f)\| <\|f\|$, for all $f \neq 0$ and $s>0$,  by Lemma~\ref{shrink}.
\end{enumerate}
Thus $F$ is a contractive flow, as defined in \cite{GKL}. By Lemma \ref{push}, 
$$
F(s, \mathbb{P}\underline{\mathrm{L}}^d_\A) \subset \mathrm{int}(\mathbb{P}\underline{\mathrm{L}}^d_\A) \ \ \mbox{ for all } s>0.
$$
where $\mathrm{int}$ denotes the interior. The theorem now follows from \cite[Lemma~2.3]{GKL}. 
\end{proof}

The case of Theorem \ref{mainMA}  when $\A$ is the partition $\{w_1\}, \{w_2\}, \ldots, \{w_n\}$, yields the following theorem.
\begin{theorem}\label{MA}
The space  $\mathbb{P}\underline{\mathrm{L}}^d_n$ is homeomorphic to a closed Euclidean ball. 
\end{theorem}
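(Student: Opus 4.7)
The plan is to deduce Theorem~\ref{MA} as the trivial specialization of Theorem~\ref{mainMA}. Concretely, I take $\A$ to be the finest partition of the variable set, $\A = \{\{w_1\},\{w_2\},\ldots,\{w_n\}\}$. Since each block $A_j = \{w_j\}$ is a singleton, the condition that a polynomial be symmetric in the variables within $A_j$ is vacuous, so $\underline{\mathrm{E}}_\A^d = \underline{\mathrm{E}}_n^d$. By the definition $\mathbb{P}\underline{\mathrm{L}}^d_\A := \mathbb{P}\underline{\mathrm{L}}^d_n \cap \underline{\mathrm{E}}_\A^d$ recalled just before Theorem~\ref{mainMA}, this gives $\mathbb{P}\underline{\mathrm{L}}^d_\A = \mathbb{P}\underline{\mathrm{L}}^d_n$.

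Applying Theorem~\ref{mainMA} to this partition then immediately yields that $\mathbb{P}\underline{\mathrm{L}}^d_n$ is homeomorphic to a closed Euclidean ball, which is the desired statement. There is nothing further to verify at this stage: all the substantive work has been done in the proof of Theorem~\ref{mainMA}, namely that the SEP flow $T_s$ with uniform rates $q_\tau = 1/\binom n 2$ is contractive (Lemma~\ref{shrink}), that it maps $\mathbb{P}\underline{\mathrm{L}}^d_n$ into its interior for all positive times (Lemma~\ref{push}), and that these two properties together with the contractive-flow criterion of Galashin--Karp--Lam (\cite[Lemma~2.3]{GKL}) force the space to be a topological ball. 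Consequently there is no real obstacle here; the only point one needs to confirm is the vacuity of the symmetry constraint for singleton blocks, which is immediate from the definitions. In this sense Theorem~\ref{MA} is genuinely a one-line corollary of the more general Theorem~\ref{mainMA}, and the only reason to single it out is that it resolves the Br\"and\'en--Huh conjecture from \cite{BH} for the unrestricted multiaffine Lorentzian locus.
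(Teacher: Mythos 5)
Your proposal is correct and is precisely the paper's own argument: Theorem~\ref{MA} is deduced as the special case of Theorem~\ref{mainMA} for the partition into singletons $\{w_1\},\ldots,\{w_n\}$, where the symmetry constraint is vacuous and $\mathbb{P}\underline{\mathrm{L}}^d_\A=\mathbb{P}\underline{\mathrm{L}}^d_n$. Nothing further is needed.
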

Let $\kappa=(\kappa_1, \ldots, \kappa_n)$ be a vector of positive integers. Define $\mathrm{E}^d_\kappa$ to be the space of all polynomials $f$ in $\mathrm{E}^d_n$ for which the degree of $f$ in the variable $w_i$ is at most $\kappa_i$, for each $1\leq i \leq n$. Also, let $\mathbb{P}\mathrm{L}^d_\kappa=\mathrm{E}^d_\kappa \cap \mathbb{P}\mathrm{L}^d_n$. 

\begin{theorem}\label{degreebounds}
The space $\mathbb{P}\mathrm{L}^d_\kappa$  is homeomorphic to  a closed Euclidean ball.
\end{theorem}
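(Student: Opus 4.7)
The plan is to reduce Theorem \ref{degreebounds} to Theorem \ref{mainMA} via polarization. Set $N = \kappa_1 + \cdots + \kappa_n$ and introduce variables $u_{i,j}$ for $1 \leq i \leq n$, $1 \leq j \leq \kappa_i$. Let $\A = \{A_1, \ldots, A_n\}$ be the partition of these $N$ variables into the blocks $A_i = \{u_{i,1}, \ldots, u_{i,\kappa_i}\}$. I would introduce the polarization map $\Pi : \mathrm{E}^d_\kappa \to \underline{\mathrm{E}}^d_\A$ defined on monomials by
$$
\Pi(\ww^\alpha) = \prod_{i=1}^n \frac{e_{\alpha_i}(u_{i,1},\ldots,u_{i,\kappa_i})}{\binom{\kappa_i}{\alpha_i}}
$$
and extended linearly. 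The image is multiaffine, homogeneous of degree $d$, symmetric within each block $A_i$, and normalized at $\one$ since $e_{\alpha_i}(\one) = \binom{\kappa_i}{\alpha_i}$; so $\Pi$ indeed maps into $\underline{\mathrm{E}}^d_\A$.

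Next I would show that $\Pi$ is a linear isomorphism. Diagonalizing by substituting $u_{i,j} = w_i$ recovers $f$ from $\Pi f$, using $e_{\alpha_i}(w_i,\ldots,w_i) = \binom{\kappa_i}{\alpha_i} w_i^{\alpha_i}$; this already gives injectivity. A dimension count on monomial bases (both sides being indexed by the exponent vectors $\alpha$ with $|\alpha| = d$ and $\alpha_i \leq \kappa_i$) then forces bijectivity. As a linear map between finite-dimensional real vector spaces, both $\Pi$ and its inverse are automatically homeomorphisms.

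The crucial ingredient is the polarization principle for Lorentzian polynomials established in \cite{BH}: a polynomial $f \in \mathrm{E}^d_\kappa$ is Lorentzian if and only if $\Pi f$ is Lorentzian. Granting this, $\Pi$ restricts to a homeomorphism between $\mathbb{P}\mathrm{L}^d_\kappa$ and $\mathbb{P}\underline{\mathrm{L}}^d_\A$, and Theorem \ref{mainMA} applied to the partition $\A$ concludes the argument. I do not expect any serious obstacles beyond invoking this polarization principle: the construction and verification of $\Pi$ are mechanical, so the entire proof amounts to a transport of structure from the multiaffine setting to the bounded-degree one.
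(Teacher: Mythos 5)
Your proposal is correct and follows essentially the same route as the paper: the paper's proof also polarizes $\mathrm{E}^d_\kappa$ into $\underline{\mathrm{E}}^d_\A$ via $w_i^{\alpha_i}\mapsto e_{\alpha_i}(w_{i1},\ldots,w_{i\kappa_i})/\binom{\kappa_i}{\alpha_i}$, cites \cite[Proposition~3.1]{BH} for preservation of the Lorentzian property in both directions, and then invokes Theorem~\ref{mainMA}. No gaps.
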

\begin{proof}
Consider the variables $w_{ij}$, $1\leq i \leq n$ and $1\leq j \leq \kappa_j$, and let $\A=\{A_1, \ldots, A_n\}$ be the partition of $\{w_{ij}\}$ given by $A_i=\{ w_{ij} \mid 1\leq j \leq \kappa_j\}$ for $1 \leq i \leq n$. 
 
The \emph{polarization operator} $\Pi^{\uparrow} : \mathrm{E}^d_\kappa \rightarrow \underline{\mathrm{E}}^d_\A$ is defined as follows. Let $f \in  \mathrm{E}^d_\kappa$. For all $i \in [n]$ and each monomial $w_1^{\alpha_1} \cdots w_n^{\alpha_n}$ in the expansion of $f$, replace $w_i^{\alpha_i}$ with 
$$
e_{\alpha_i}(w_{i1},\ldots, w_{i\kappa_i} )/ \binom {\kappa_i} {\alpha_i}.
$$
The polarization operator is a linear isomorphism with inverse $\Pi^{\downarrow}$ defined by the change of variables $w_{ij} \to w_i$. Also $\Pi^{\uparrow}$ and $\Pi^{\downarrow}$  preserve the Lorentzian property \cite[Proposition~3.1]{BH}. Hence $\Pi^{\uparrow}$ defines a homeomorphism  between $\mathbb{P}\mathrm{L}^d_\kappa$  and $\mathbb{P}\underline{\mathrm{L}}_\A^d$. The theorem now follows from Theorem~\ref{mainMA}. 
\end{proof}
By choosing $\kappa=(d,d,\ldots, d)$ in Theorem~\ref{degreebounds}, we have arrived at a proof of \cite[Conjecture~2.28]{BH}. 
\begin{theorem}\label{main}
The space $\mathbb{P}\mathrm{L}^d_n$ is homeomorphic to a closed Euclidean ball.
\end{theorem}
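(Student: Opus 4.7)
The plan is extremely short: specialize Theorem~\ref{degreebounds} to the constant vector $\kappa=(d,d,\ldots,d)$. The observation that makes this work is purely combinatorial: every degree-$d$ homogeneous polynomial in $w_1,\ldots,w_n$ has degree at most $d$ in each individual variable $w_i$, the highest single-variable degree being realised by the monomial $w_i^d$. Consequently the per-variable degree constraint defining $\mathrm{E}^d_\kappa$ is vacuous for this choice, and $\mathrm{E}^d_\kappa=\mathrm{E}^d_n$ as affine spaces. Intersecting both sides with the Lorentzian cone $\mathrm{L}^d_n$ yields the identification $\mathbb{P}\mathrm{L}^d_\kappa=\mathbb{P}\mathrm{L}^d_n$, which is literally the identity map on the same subset of $\mathrm{E}^d_n$ and hence a homeomorphism.

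With that identification in hand, Theorem~\ref{degreebounds} applied to $\kappa=(d,d,\ldots,d)$ immediately furnishes a homeomorphism between $\mathbb{P}\mathrm{L}^d_n$ and a closed Euclidean ball, completing the proof and settling \cite[Conjecture~2.28]{BH}. There is no genuine obstacle to overcome at this final stage: all the analytic content has already been invested upstream, in the contractive-flow argument behind Theorem~\ref{mainMA} (built from the SEP eigenvalue analysis in Lemma~\ref{shrink}, the boundary-pushing Lemma~\ref{push}, and the abstract topological input \cite[Lemma~2.3]{GKL}), and in the polarization homeomorphism $\Pi^\uparrow\colon\mathrm{E}^d_\kappa\to\underline{\mathrm{E}}^d_\A$ used to prove Theorem~\ref{degreebounds}. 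The only point worth verifying carefully is that the per-variable degree bound $\deg_{w_i}f\le d$ is automatic for $f\in\mathrm{H}^d_n$, which is immediate from homogeneity.
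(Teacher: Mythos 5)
Your proposal is correct and is exactly the paper's argument: Theorem~\ref{main} is obtained by specializing Theorem~\ref{degreebounds} to $\kappa=(d,d,\ldots,d)$, where the per-variable degree bound is vacuous by homogeneity. No further comment is needed.
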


\section{Balls of real stable polynomials}\label{stabilt}
A polynomial $f$ in $\RR[w_1, \ldots, w_n]$ is \emph{real stable} if $f(\ww) \neq 0$ whenever $\Im(w_j)>0$ for all $j \in [n]$, see \cite{BB1,BB2,Wag}. Homogeneous real stable polynomials with nonnegative coefficients are Lorentzian, see \cite{BH}. Let $\mathrm{S}^d_n$ be the space of degree $d$ homogeneous and real stable polynomials $f$ in $ \RR[w_1, \ldots, w_n]$ with nonnegative coefficients. Let further $\mathbb{P}\mathrm{S}^d_n$ be the set of all $f \in \mathrm{S}^d_n$ for which $f(\one)=1$. It follows from \cite[Lemma~1.5]{BB1} that a polynomial $f \in \mathrm{H}^d_n$ with nonnegative coefficients is real stable if and only if for each $\yy  \in \RR^{n}$,  the polynomial 
$$
t \mapsto f(t\one -\yy)
$$
has only real zeros. Notice that by homogeneity this condition is equivalent to the condition: for each $\yy  \in \SS^{n-2}_\one$, the polynomial 
$
t \mapsto f(t\one -\yy)
$
has only real zeros. By compactness it follows that $f \in \mathrm{H}^d_n$ is in the interior of $\mathrm{S}^d_n$ if and only if all coefficients of $f$ are positive, and for each $\yy  \in \SS^{n-2}_\one$, the polynomial 
$
t \mapsto f(t\one -\yy)
$
has only real and distinct zeros.

\begin{lemma}\label{elementary-stu}
Let $0\leq d \leq n$ be integers, and let $\yy \in \RR^n$ be such that at most $n-d+1$ of the coordinates agree. The zeros $e_d(t\one -\yy)$ are real and distinct. 
\end{lemma}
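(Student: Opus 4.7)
The plan is to express $e_d(t\one - \yy)$ as a derivative of the polynomial $P(z) := \prod_{i=1}^n(z - y_i)$, and then analyze how differentiation affects the multiplicities of zeros. I would derive the identity
\[
e_d(t\one - \yy) = \frac{1}{(n-d)!}\, P^{(n-d)}(t)
\]
by expanding $P(s + t) = \prod_{i=1}^n \bigl(s + (t - y_i)\bigr) = \sum_{k=0}^n s^{n-k}\, e_k(t\one - \yy)$ and matching coefficients against the Taylor expansion $P(s+t) = \sum_{k=0}^n \frac{P^{(n-k)}(t)}{(n-k)!}\, s^{n-k}$ in the variable $s$. With this identity in hand, it suffices to show that $P^{(n-d)}$ has only real and simple zeros.

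Next I would establish the following sharpening of Rolle's theorem: if $Q$ is a real-rooted polynomial of degree $N \geq 1$ whose zeros have multiplicity at most $M$, then $Q'$ is real-rooted with zeros of multiplicity at most $\max(M-1,1)$. To prove this, let $v_1 < \cdots < v_r$ be the distinct zeros of $Q$ with multiplicities $m_1, \ldots, m_r$. The derivative $Q'$ vanishes at each $v_i$ to order exactly $m_i - 1$, accounting for $\sum_i (m_i - 1) = N - r$ zeros, and by Rolle it has at least one zero in each of the $r - 1$ open intervals $(v_i, v_{i+1})$. Since $\deg Q' = N - 1 = (N - r) + (r - 1)$, the degree count forces every such intervening zero to be simple and unique in its interval. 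In particular, no new multiple zero can be created, and the claimed multiplicity bound follows.

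Iterating the sharpened Rolle lemma $n - d$ times starting from $P$ yields that $P^{(n-d)}$ is real-rooted with zeros of multiplicity at most $\max(M - (n - d),\, 1)$, where $M$ is the maximum number of coordinates of $\yy$ sharing a common value. The hypothesis $M \leq n - d + 1$ then gives $M - (n - d) \leq 1$, so every zero of $P^{(n-d)}$ is simple, and the identity from the first step finishes the proof. The main delicate point is the degree-counting in the sharpened Rolle lemma, which is what upgrades plain real-rootedness to the multiplicity bound needed to propagate simplicity through repeated differentiation.
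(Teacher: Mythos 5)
Your proof is correct and follows the same route as the paper: both express $e_d(t\one-\yy)$ as $p^{(n-d)}(t)/(n-d)!$ for $p(t)=\prod_i(t-y_i)$ and then track how multiplicities of zeros behave under repeated differentiation of a real-rooted polynomial. The paper simply asserts the key multiplicity fact (a zero of $p^{(n-d)}$ of multiplicity $k\geq 2$ forces a zero of $p$ of multiplicity $k+(n-d)$), whereas you supply the Rolle-plus-degree-count argument behind it; the substance is identical.
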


\begin{proof}
For $\yy \in \RR^n$, let $p(t)= \prod_{i=1}^n (t-y_i)$. Notice that $$(n-d)!e_d(t\one -\yy) = p^{(n-d)}(t),$$ the $(n-d)$th derivative of $p(t)$. Hence all zeros of $e_d(t\one -\yy)$ are real. Moreover if $\alpha$ is a zero of $e_d(t\one -\yy)$ of multiplicity $k \geq 2$, then $\alpha$ is a zeros of $p$ of multiplicity $k+(n-d)$. The lemma follows. 
\end{proof}

For $1\leq d \leq n$, let $f_n^d(w_1,\ldots, w_n)$ be the polynomial obtained from  $e_{d}(w_{11}, w_{12}, \ldots, w_{nd})/\binom {nd} d$ by the change of variables  $w_{ij} \to w_i$ for all $1 \leq i \leq n$ and $1 \leq j \leq d$. 

\begin{lemma}\label{fnd}
The polynomial $f_n^d(\ww)$ lies in the interior of  $\mathbb{P}\mathrm{S}^d_n$. 
\end{lemma}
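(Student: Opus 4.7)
The plan is to verify the two conditions characterizing interior points of $\mathbb{P}\mathrm{S}^d_n$ recalled just before the lemma: (i) all coefficients of $f_n^d$ are positive, and (ii) for every $\yy \in \SS^{n-2}_\one$, the univariate polynomial $t \mapsto f_n^d(t\one - \yy)$ has only real and distinct zeros. Condition (i) is immediate, since $e_d$ has positive coefficients and the substitution $w_{ij} \to w_i$ only sums monomials with positive coefficients.

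For (ii), the key observation is the identity
\[
\binom{nd}{d} f_n^d(t\one - \yy) = e_d(t\one' - \yy'),
\]
where $\one' \in \RR^{nd}$ is the all-ones vector and $\yy' \in \RR^{nd}$ is the vector obtained from $\yy=(y_1,\ldots,y_n)$ by listing each coordinate $y_i$ exactly $d$ times. This is immediate from the definition of $f_n^d$ together with the substitution $w_{ij}=t-y_i$.

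I would then apply Lemma~\ref{elementary-stu}, with $n$ replaced by $nd$, to the point $\yy' \in \RR^{nd}$. The hypothesis to check becomes: no value is repeated more than $nd-d+1$ times among the coordinates of $\yy'$. Any $\yy \in \SS^{n-2}_\one$ is nonzero and has coordinates summing to $0$, so $\yy$ is not a scalar multiple of $\one$; hence no value appears in more than $n-1$ coordinates of $\yy$, and therefore in no more than $(n-1)d = nd-d$ coordinates of $\yy'$. Since $nd-d<nd-d+1$, Lemma~\ref{elementary-stu} applies and gives that $e_d(t\one' - \yy')$, and consequently $f_n^d(t\one - \yy)$, has only real and distinct zeros.

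The argument is essentially bookkeeping between the original $n$ variables and the $nd$ polarized variables; the only substantive point is that the sum-zero constraint built into $\SS^{n-2}_\one$ rules out the one configuration where the multiplicity hypothesis of Lemma~\ref{elementary-stu} could fail, namely $\yy$ proportional to $\one$. I do not foresee any real obstacle beyond stating the identity above correctly.
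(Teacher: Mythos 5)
Your proposal is correct and follows essentially the same route as the paper: the identity $f_n^d(t\one-\yy)=e_d(t\one'-\yy')/\binom{nd}{d}$ plus an application of Lemma~\ref{elementary-stu} in $nd$ variables. You merely spell out the multiplicity count (no value occurs more than $(n-1)d=nd-d$ times in $\yy'$) that the paper leaves implicit.
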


\begin{proof}
All coefficients of $f_n^d(\ww)$ are positive. 
We need to prove that the zeros of $f_n^d(t\one -\yy)$ are real and distinct whenever $\yy \in \RR^n$ is not parallel to $\one$. This follows from Lemma \ref{elementary-stu}, since 
$$f_n^d(t\one -\yy)= e_{d}(t\one -\widetilde{\yy})/\binom {nd} d, \mbox{ where } \widetilde{\yy}=(y_1,\ldots, y_1, \ldots, y_n, \ldots, y_n).$$ 
\end{proof}

\begin{theorem}\label{mainS}
The space   $\mathbb{P}\mathrm{S}^d_n$ is homeomorphic to a closed Euclidean ball.
\end{theorem}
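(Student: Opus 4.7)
The plan is to mirror the strategy of Theorem~\ref{mainMA}: reduce to a multiaffine symmetric model by polarization, then run an SEP-based contractive-flow argument. First I would introduce variables $w_{ij}$ for $1\leq i\leq n$ and $1\leq j\leq d$, let $\A=\{A_1,\ldots,A_n\}$ be the partition with $A_i=\{w_{i1},\ldots,w_{id}\}$, and set $\kappa=(d,\ldots,d)$. Polarization $\Pi^{\uparrow}:\mathrm{E}^d_\kappa\to\underline{\mathrm{E}}^d_\A$ and its inverse $\Pi^{\downarrow}$ (the substitution $w_{ij}\to w_i$) both preserve real stability, a classical consequence of the Grace--Walsh--Szeg\H{o} theorem (cf.\ \cite{BB1,Wag}). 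Since every polynomial in $\mathrm{S}^d_n$ has degree at most $d$ in each variable, $\Pi^{\uparrow}$ restricts to a homeomorphism from $\mathbb{P}\mathrm{S}^d_n$ onto the space $\mathcal{X}$ of $\A$-symmetric, multiaffine, homogeneous degree-$d$, real stable polynomials in $\RR[w_{11},\ldots,w_{nd}]$ with $f(\one)=1$. It therefore suffices to show $\mathcal{X}$ is a closed Euclidean ball.

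Next I would deploy the SEP flow $T_s$ on $\underline{\mathrm{H}}^d_{nd}$ with uniform rates $q_\tau=1/\binom{nd}{2}$. The operator $\LL$ is $\sym_{nd}$-invariant, so $T_s$ preserves $\underline{\mathrm{E}}^d_\A$, and by \cite{BBL} it preserves multiaffine real stability; thus $T_s:\mathcal{X}\to\mathcal{X}$ for $s\geq 0$. The eigenpolynomial $f_0=e_d(w_{11},\ldots,w_{nd})/\binom{nd}{d}$ lies in $\mathcal{X}$ and corresponds under $\Pi^{\downarrow}$ to $f_n^d$; by Lemma~\ref{fnd} this is in the interior of $\mathbb{P}\mathrm{S}^d_n$, so $f_0$ is in the interior of $\mathcal{X}$. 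Lemma~\ref{shrink}, applied within the subspace $\underline{\mathrm{E}}^d_\A$ with its inherited Euclidean structure, makes $F(s,f)=T_s(f)$ a contractive flow toward $f_0$.

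The hard part will be the stable analogue of Lemma~\ref{push}: $T_s(f)\in\mathrm{int}(\mathcal{X})$ for every $f\in\mathcal{X}$ and every $s>0$, equivalently $\Pi^{\downarrow}(T_s(f))\in\mathrm{int}(\mathbb{P}\mathrm{S}^d_n)$. Positivity of coefficients follows from a power of $\LL$ having positive entries, as in the Lorentzian case. For the simple-real-roots condition I would fix $\yy\in\SS^{n-2}_\one$ and study the discriminant $\Delta(\yy;s)$ of the degree-$d$ polynomial $t\mapsto\Pi^{\downarrow}(T_s(f))(t\one-\yy)$. By~\eqref{form}, $\Delta(\yy;\,\cdot\,)$ is entire in $s$; the zeros stay real for $s\geq 0$ because $T_s$ preserves real stability. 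So $\Delta(\yy;\,\cdot\,)$ either has isolated zeros or vanishes identically. Letting $s\to\infty$ rules out the latter: $\Pi^{\downarrow}(T_s(f))\to f_n^d$, and Lemma~\ref{fnd} forces the discriminant of $f_n^d$ at $\yy$ to be nonzero. Compactness of $\SS^{n-2}_\one$ combined with Hurwitz' theorem then produces a uniform $\delta>0$ such that $\Delta(\yy;s)\neq 0$ for all $\yy$ and all $0<s<\delta$, placing $T_s(f)$ in the interior for small $s>0$; the open mapping theorem propagates this to all $s>0$.

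With these ingredients assembled, $F$ is a contractive flow on $\underline{\mathrm{E}}^d_\A$ satisfying $F(s,\mathcal{X})\subset\mathrm{int}(\mathcal{X})$ for every $s>0$, and \cite[Lemma~2.3]{GKL} identifies $\mathcal{X}$, and hence $\mathbb{P}\mathrm{S}^d_n$, with a closed Euclidean ball. The principal obstacle is precisely the push step: whereas in the Lorentzian setting only quadratic discriminants intervened, here one must handle the degree-$d$ discriminant as an entire function of $s$ and appeal to Lemma~\ref{fnd} at the limit $s\to\infty$ to exclude identical vanishing.
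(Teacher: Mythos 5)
Your proposal is correct and follows essentially the same route as the paper: polarize to the multiaffine $\A$-symmetric model, run the uniform-rate SEP as a contractive flow toward $e_d/\binom{nd}{d}$ (i.e.\ $f_n^d$, interior by Lemma~\ref{fnd}), establish the push-into-the-interior step via the entire discriminant $\Delta(\yy;s)$, the $s\to\infty$ limit, compactness and Hurwitz, and conclude with \cite[Lemma~2.3]{GKL}. The only cosmetic difference is that the paper conjugates the flow back to $\mathrm{E}^d_n$ as $\widehat{T}_s=\Pi^{\downarrow}\circ T_s\circ\Pi^{\uparrow}$ rather than working inside the polarized space, which changes nothing of substance.
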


\begin{proof}
The  proof is a modification of Theorem~\ref{mainMA}. 

It was proved in \cite{BBL} that SEP preserves stability on multiaffine polynomials. Also, in \cite{BB1} it was proved that the polarization operator $\Pi^{\uparrow}$ preserves stability. For $s \in \RR$, define $\widehat{T}_s : \mathrm{E}_n^d \to \mathrm{E}_n^d$ by $\widehat{T}_s= \Pi^{\downarrow} \circ T_s  \circ \Pi^{\uparrow}$, where $T_s$ is the SEP on $\underline{\mathrm{E}}_{nd}^d$, with rates $q_\tau = 1/ \binom {nd} 2$ for all transpositions $\tau$. By the discussion in Section \ref{proof}, it follows that $\widehat{T}_s$ is a contractive flow, which contracts the space to the point $f_n^d(\ww)$ in its interior (Lemma \ref{fnd}). To finish the proof, it remains to prove that  for each fixed $f \in \mathbb{P}\mathrm{S}^d_n$, there is a $\delta>0$ such that $\widehat{T}_s(f)$ is in the interior of $\mathbb{P}\mathrm{S}^d_n$  for all $0<s<\delta$.

Let $f \in \mathbb{P}\mathrm{S}^d_n$ and $\yy \in \SS^{n-2}_\one$. Denote by  $\Delta_f(\yy;s)$ the discriminant of the polynomial  
$
t \mapsto \widehat{T}_s(f)(t\one-\yy)$. 
From \eqref{form} it follows that $\Delta_f(\yy;s)$ defines an entire function in the (complex) variable $s$. Hence either there is a positive number $\delta(\yy)$ for which  $\Delta_f(\yy;s) \neq 0$ whenever $0<|s|<\delta(\yy)$, or $\Delta_f(\yy;s)$ is identically zero. The latter can't happen since then, by letting $s \to \infty$, it follows that the discriminant of the polynomial $t \mapsto f_n^d(t\one -\yy)$ is equal to zero, which contradicts the fact that $f_n^d(\ww)$ is in the interior of $\mathbb{P}\mathrm{S}^d_n$.  By compactness (of $\SS^{n-2}_\one$) and Hurwitz' theorem on the continuity of zeros, it follows that there is a uniform $\delta>0$ such that $\Delta_f(\yy;s) \neq 0$ whenever $0<|s|<\delta$ and $\yy \in \SS^{n-2}_\one$. Hence $\widehat{T}_s(f)$ is in the interior of $\mathbb{P}\mathrm{S}^d_n$  for all $0<s<\delta$.
\end{proof}

\section{Discussion}
The support of a polynomial $\sum_{\alpha \in \NN^n} a_\alpha w_1^{\alpha_1} \cdots w_n^{\alpha_n}$ is 
$\mathrm{J}= \{ \alpha \mid a_\alpha \neq 0\}$. 
There is a one-to-one correspondence between the supports of polynomials in $\mathbb{P}\mathrm{L}^d_n$ and $\mathrm{M}$-convex sets in $\Delta_n^d=\{\alpha \in \NN^n \mid \alpha_1+\cdots+\alpha_n=d\}$, see \cite{BH}. Let $\mathbb{P}\mathrm{L}_\mathrm{J}$ be the space of polynomials in $\mathbb{P}\mathrm{L}^d_n$ with support $\mathrm{J}$. 
Hence 
$$
\mathbb{P}\mathrm{L}^d_n = \bigsqcup_{\mathrm{J}}\mathbb{P}\mathrm{L}_\mathrm{J},
$$
where the disjoint union is over all $\mathrm{M}$-convex sets in $\Delta_n^d$. The space $\mathbb{P}\mathrm{L}_\mathrm{J}$ is nonempty and contractible for each $\mathrm{J}$ \cite[Theorem~3.10 and Proposition~3.25]{BH}. 

Similarly, there is a one-to-one correspondence between the supports of polynomials in $\mathbb{P}\underline{\mathrm{L}}^d_n$ and rank $d$ matroids on $[n]$. Let  $\mathbb{P}\underline{\mathrm{L}}_\mathrm{M}$ be the space of polynomials in  $\mathbb{P}\underline{\mathrm{L}}^d_n$  whose support is the set of bases of $\mathrm{M}$. Then 
$$
\mathbb{P}\underline{\mathrm{L}}^d_n = \bigsqcup_{\mathrm{M}}\mathbb{P}\underline{\mathrm{L}}_\mathrm{M},
$$
where the  union is over all matroids of rank $d$ on $[n]$. The space $\mathbb{P}\underline{\mathrm{L}}_\mathrm{M}$ is nonempty and contractible for each $\mathrm{M}$, \cite[Theorem~3.10 and Proposition~3.25]{BH}. 

Postnikov \cite{Pos} proved that there is a similar stratification of the \emph{totally nonnegative Grassmannian} into cells corresponding to matroids that are realizable by real matrices whose maximal minors are all nonnegative. He conjectured that these cells are homeomorphic to closed Euclidean balls. The conjecture was recently proved by Galashin, Karp and  Lam \cite{GKL,GKL2,GKL3} in a more general setting. The next two questions are Lorentzian analogs of Postnikov's conjecture. 
\begin{question}\label{con1}
Let $\mathrm{J}$ in $\Delta_n^d$ be an $\mathrm{M}$-convex set.  Is the closure of $\mathbb{P}\mathrm{L}_\mathrm{J}$  homeomorphic to a closed Euclidean ball? 
\end{question}
In this paper we settled the case of Question \ref{con1} when $\mathrm{J}=\Delta_n^d$.  
\begin{question}\label{con2}
Let $\mathrm{M}$ be a matroid on $[n]$. Is the closure of $\mathbb{P}\underline{\mathrm{L}}_\mathrm{M}$  homeomorphic to a closed Euclidean ball? 
\end{question}
In this paper we settled the case of Question \ref{con2} when  $\mathrm{M}$ is uniform.

We also have a stratification 
$$
\mathbb{P}\mathrm{S}^d_n = \bigsqcup_{\mathrm{J}}\mathbb{P}\mathrm{S}_\mathrm{J},
$$
where the union is 
over all $\mathrm{M}$-convex sets in $\Delta_n^d$ that occur as supports of polynomials in $\mathbb{P}\mathrm{S}^d_n$.  Are the cells $\overline{\mathbb{P}\mathrm{S}_\mathrm{J}}$ homeomorphic to Euclidean balls. Are they contractible? Similar questions could be asked for the analogous spaces of multiaffine homogeneous and real stable polynomials. Let $\mathbb{P}\underline{\mathrm{S}}^d_n$ be the space of all multiaffine polynomials in $\mathbb{P}\mathrm{S}^d_n$. In particular 
\begin{conjecture}
The space $\mathbb{P}\underline{\mathrm{S}}^d_n$ is homeomorphic to a closed Euclidean ball. 
\end{conjecture}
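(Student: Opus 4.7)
The plan is to mirror the proof of Theorem~\ref{mainS} directly on the multiaffine space $\underline{\mathrm{E}}_n^d$. Let $T_s$ be the SEP on $\underline{\mathrm{E}}_n^d$ with uniform rates $q_\tau = 1/\binom{n}{2}$. By \cite{BBL}, $T_s$ preserves $\underline{\mathrm{S}}_n^d$ for $s \geq 0$, and by Lemma~\ref{shrink} it is a contractive flow on $\underline{\mathrm{E}}_n^d$ (in the sense of \cite{GKL}) whose limit point is the unique $\sym_n$-invariant fixed polynomial $e_d(\ww)/\binom{n}{d}$. Invoking \cite[Lemma~2.3]{GKL} would then reduce the conjecture to verifying the push-into-interior property $T_s(\mathbb{P}\underline{\mathrm{S}}_n^d)\subseteq\mathrm{int}(\mathbb{P}\underline{\mathrm{S}}_n^d)$ for every $s>0$.

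The natural attack is via the discriminant of Lemma~\ref{push}: set
\[
\Delta_f(\yy;s):=\mathrm{disc}_t\bigl(T_s(f)(t\one-\yy)\bigr),
\]
which by \eqref{form} is entire in $s\in\CC$. If $\Delta_f(\yy;\cdot)\not\equiv 0$ for every $f\in\mathbb{P}\underline{\mathrm{S}}_n^d$ and $\yy\in\SS^{n-2}_\one$, then compactness of $\SS^{n-2}_\one$ and Hurwitz' theorem produce a uniform $\delta>0$ such that $T_s(f)\in\mathrm{int}(\mathbb{P}\underline{\mathrm{S}}_n^d)$ for $0<s<\delta$, completing the argument.

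The main obstacle is ruling out $\Delta_f(\yy;\cdot)\equiv 0$. In Theorem~\ref{mainS} this was done by letting $s\to\infty$ and invoking Lemma~\ref{fnd}. Here the corresponding limit $e_d(\ww)/\binom{n}{d}$ does \emph{not} lie in $\mathrm{int}(\mathbb{P}\underline{\mathrm{S}}_n^d)$ as soon as $d\ge 3$: by Rolle's theorem (cf.\ Lemma~\ref{elementary-stu}), for any $\yy_0\in\SS^{n-2}_\one$ with more than $n-d+1$ equal coordinates (for instance $\yy_0$ proportional to $(1,\dots,1,-(n-d+1))$ with $n-d+2$ ones), the polynomial $e_d(t\one-\yy_0)$ has a repeated real zero. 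Since $e_d$ is fixed by $T_s$, this gives $\Delta_{e_d}(\yy_0;s)\equiv 0$, so the GKL hypothesis already fails at the single polynomial $f=e_d/\binom{n}{d}$ and a direct transfer of the $s\to\infty$ argument is not possible.

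To overcome this I would look for a different stability-preserving contractive flow $U_s$ on $\underline{\mathrm{E}}_n^d$ whose attractor $g$ lies in $\mathrm{int}(\mathbb{P}\underline{\mathrm{S}}_n^d)$; once such a $U_s$ is in hand, the $s\to\infty$ strategy of Theorem~\ref{mainS} would go through with $e_d/\binom{n}{d}$ replaced by $g$. One source of candidates is to combine the SEP with change-of-variables flows $w_i\mapsto e^{sc_i}w_i$ (suitably renormalized to preserve $f(\one)=1$), which are stability-preserving and break $\sym_n$-symmetry whenever the $c_i$ are non-constant; for generic $c_i$ the resulting attractor is a non-symmetric multiaffine real stable polynomial that should lie in the interior. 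The two hard parts are (i) engineering the new flow so that it is genuinely contractive in the precise sense required by \cite{GKL}, and (ii) proving that its attractor $g$ really lies in $\mathrm{int}(\mathbb{P}\underline{\mathrm{S}}_n^d)$, i.e., that $g(t\one-\yy)$ has simple real zeros uniformly in $\yy\in\SS^{n-2}_\one$. Establishing these two points is the principal remaining difficulty.
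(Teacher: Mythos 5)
First, be aware that the paper does not prove this statement: it appears in the Discussion section as an open conjecture, so there is no proof of record to compare yours against. Your proposal is likewise not a proof---you say so yourself---but your diagnosis of where the method of Theorem~\ref{mainS} breaks down is essentially correct and is the right thing to record: the SEP on $\underline{\mathrm{E}}_n^d$ contracts to $e_d(\ww)/\binom n d$, and for $d\geq 3$ one can choose $\yy_0\in\SS^{n-2}_\one$ with $n-d+2$ equal coordinates, so that by the argument of Lemma~\ref{elementary-stu} the polynomial $e_d(t\one-\yy_0)$ has a repeated zero and hence $\Delta_{e_d}(\yy_0;\cdot)\equiv 0$. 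The $s\to\infty$ step of Theorem~\ref{mainS} therefore has no direct analogue. (Your explicit $\yy_0$ does not actually have coordinate sum zero when $d>3$, but such vectors clearly exist for all $d\geq 3$, so this is cosmetic.)

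Two substantive caveats. First, you slide from ``the discriminant criterion fails at $e_d$'' to ``the GKL hypothesis already fails at $f=e_d/\binom n d$''; these are not the same. The hypothesis of \cite[Lemma~2.3]{GKL} concerns the interior of $\mathbb{P}\underline{\mathrm{S}}^d_n$ \emph{relative to the multiaffine space} $\underline{\mathrm{E}}_n^d$, whereas the simple-zeros criterion you invoke characterizes the interior of $\mathrm{S}^d_n$ inside the full space $\mathrm{H}^d_n$. These can genuinely differ: for $d=n-1$ every multiaffine homogeneous polynomial with positive coefficients is real stable (it equals $w_1\cdots w_n\sum_i a_i w_i^{-1}$, which is nonvanishing on the open upper half-planes), so $e_{n-1}/n$ lies in the relative interior of $\mathbb{P}\underline{\mathrm{S}}^{n-1}_n$ even though $e_{n-1}(t\one-\yy_0)$ has repeated zeros for suitable $\yy_0$. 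So what you have shown is that the \emph{proof technique} of Theorem~\ref{mainS} breaks, not that the flow fails to push the boundary into the interior; deciding which is the case requires a usable description of the relative interior of the multiaffine stable space, which you do not supply. Second, your proposed repair---replacing the SEP by another stability-preserving contractive flow whose attractor lies in the relative interior---is only a plan: constructing such a flow and verifying the two properties you list at the end is precisely the unresolved content of the conjecture. As it stands the proposal is an informed analysis of the obstruction, not a proof.
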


\end{document}